\newcommand{\dd}{\mathrm{d}}
\newcommand{\E}{\mathbb{E}}
\newcommand{\1}{\textbf{1}}
\newcommand{\R}{\mathbb{R}}
\newcommand{\p}[1]{\mathbb{P}\left( #1 \right)}
\newtheorem{theorem}{Theorem}
\newtheorem{lemma}[theorem]{Lemma}
\newtheorem{corollary}[theorem]{Corollary}
\theoremstyle{remark}
\theoremstyle{definition}
\begin{document}

\begin{center}
\begin{large}
STOCHASTIC DOMINANCE AND WEAK CONCENTRATION FOR SUMS OF INDEPENDENT SYMMETRIC RANDOM VECTORS\\
\end{large}
Witold Bednorz\footnote{University of Warsaw} and Tomasz Tkocz\footnote{Carnegie Mellon University}
\end{center}

\bigskip

\begin{footnotesize}

\begin{center}
\parbox{0.75\textwidth}{
\noindent\textbf{Abstract.}
Kwapie\'n and Woyczy\'nski asked in their monograph (1992) whether their notion of superstrong domination is inherited when taking sums of independent symmetric random vectors (one vector dominates another if, essentially, tail probabilities of any norm of the two vectors compare up to some scaling constants). We answer this question positively. As a by-product of our methods, we establish that a certain notion of weak concentration is also preserved by taking sums of independent symmetric random vectors.
}
\end{center}

\bigskip

\noindent {\em 2010 Mathematics Subject Classification.} Primary 60E15; Secondary 52A05.

\noindent {\em Key words.} stochastic order, stochastic superstrong dominance, tail comparison, peakedness, concentration, convex set, convex measure, sum of independent random vectors, symmetric random vector
\end{footnotesize}

\bigskip

\section{Introduction}

Stochastic orderings quantitatively capture the notion of one random variable being \emph{greater} than another one. Common examples include $\mathcal{U}$-stochastic orderings. If $\mathcal{U}$ is a family of real valued functions defined on, say a real separable Banach space $E$, we say for $E$-valued random vectors $X$ and $Y$ that $X$ is $\mathcal{U}$-dominated by $Y$, written $X \prec_{\mathcal{U}} Y$, if $\E f(X) \leq \E f(Y)$, for all functions $f$ in $\mathcal{U}$. For instance, taking $\mathcal{U}$ to be the family of nonnegative convex functions on $E$ results in the usual convex stochastic ordering, or considering $\mathcal{U}$ to be the family of the exponents of bounded linear functionals, $\exp\{x^*(\cdot)\}$, $x^* \in E^*$, can be used to define sub-Gaussian random vectors, just to name several important examples. An inductive argument shows that these two orderings are inherited for sums: if $X_i \prec_{\mathcal{U}} Y_i$, for $i = 1,\ldots,n$ and the $X_i$ and $Y_i$ are independent, then $\sum_{i=1}^n X_i \prec_{\mathcal{U}}  \sum_{i=1}^n Y_i$. This is a significant property of a stochastic ordering as it allows to compare sums in presence of the comparison for independent summands. The main goal of this article is to establish such a \emph{tensorisation} property for symmetric random vectors of a stochastic ordering called \emph{superstrong domination}, which we shall now define.

Let $X$ and $Y$ be symmetric random vectors with values in a real separable Banach space $E$. We say in this paper that $Y$ dominates $X$ with constants $\kappa, \lambda \geq 1$ ($(\kappa,\lambda)$-dominates, in short) if for every closed convex and symmetric set $K$ in $E$ we have
\[
\p{X \notin K} \leq \kappa\p{\lambda Y \notin K}.\]
We sometimes write $X \prec_{(\kappa,\lambda)} Y$. Equivalently, $X$ is $(\kappa,\lambda)$-dominated by $Y$ if for every continuous norm $\|\cdot\|$ on $E$ we have
\[
\p{\|X\|>1} \leq \kappa\p{\lambda\|Y\|>1}.\]
(The inequality for convex sets clearly implies the inequality for norms. Conversely, given a closed convex and symmetric set $K$ in $E$, take $K_{\delta,R}$ to be the $\delta$-enlargement of $K$, $\{x \in E, \textrm{dist}(x,K) \leq \delta\}$ intersected with the closed ball of radius $R$ in $E$. The Minkowski functional of $K_{\delta,R}$ defines a continuous norm on $E$. Letting $\delta$ go to $0$ and $R$ to $\infty$ finishes the argument.)

This notion appears as \emph{superstrong domination} in the monograph by Kwapie\'n and Woyczy\'nski (see \cite{KW}, Chapters 3.2 and 3.6). It can be viewed as a less restrictive version of the $\mathcal{U}$-stochastic ordering for the family $\mathcal{U}$ comprising the indicators of complements of convex symmetric sets. The special case, $\kappa = \lambda = 1$ and $E = \R^d$ is known as \emph{peakedness} and was first introduced by Birnbaum (univariate case, $d = 1$, see \cite{Bir}), Sherman (multivariate case, $d \geq 1$, see \cite{Sher}) and by Kanter (see \cite{K}). In this case, by considering symmetric strips, if $X \prec_{(1,1)} Y$, then necessarily $\E|\langle t, X \rangle|^2 \leq \E|\langle t, Y \rangle|^2$, for all vectors $t$ in $\R^d$. For symmetric Gaussian random vectors $X$ and $Y$, this simple necessary condition is also sufficient! (Since the matrix $[\E(Y_iY_j-X_iX_j)]_{i,j}$ is positive semi-definite, there is an independent symmetric Gaussian random vector $Z$ such that $Y = X+Z$ and Anderson's inequality, see \cite{And} finishes the argument.)

Kwapie\'n and Woyczy\'nski posed a question whether superstrong domination is preserved by taking sums of independent symmetric random vectors. They remarked that the answer is positive for vectors taking values in one-dimensional subspaces, crediting this result to Jain and Marcus (see \cite{JM} and Theorem 3.2.1 in \cite{KW}). Kanter's result says that the peakedness of log-concave measures in $\R^d$ tensorises (see Corollary 3.2 in \cite{K}), supporting the ``yes" answer in this case. Our first main result provides the positive answer in full generality.

\begin{theorem}\label{thm:kwaptensor}
Let $X_1,\ldots,X_n$ and $Y_1,\ldots,Y_n$ be independent symmetric random vectors with values in separable Banach space. Suppose that $X_i$ is $(\kappa,\lambda)$-dominated by $Y_i$ for each $i = 1,\ldots,n$. Then the sum $X_1+\ldots+X_n$ is $(16\alpha^{-1}\lceil \kappa \rceil, (1+\alpha)\lceil \kappa \rceil \lambda)$-dominated by the sum $Y_1+\ldots+Y_n$ for any $0 < \alpha \leq 1$. 
\end{theorem}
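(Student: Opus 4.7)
Via the convex-set–norm equivalence stated in the introduction, fix a continuous norm $\|\cdot\|$ on $E$, and write $S_X = X_1+\cdots+X_n$, $S_Y = Y_1+\cdots+Y_n$. The goal becomes
\[
\p{\|S_X\| > 1} \leq 16\alpha^{-1}\lceil\kappa\rceil\,\p{(1+\alpha)\lceil\kappa\rceil\lambda\|S_Y\| > 1}.
\]
The fundamental building block is a \emph{one-step replacement lemma}: for a symmetric convex $L \subseteq E$, independent symmetric random vectors $U, V$, and an independent symmetric $U'$ with $U \prec_{(\kappa,\lambda)} U'$,
\[
\p{U + V \notin L} \leq 2\kappa\,\p{\lambda U' + V \notin L}.
\]
Indeed, conditionally on $V$, one replaces the (generally non-symmetric) set $L - V$ by its symmetric convex subset $(L-V)\cap(L+V)$, to which superstrong domination applies; a union bound together with the symmetry of $U'$ then converts the intersection back into $\p{\lambda U' + V \notin L}$ at the cost of a factor $2$. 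Iterating this lemma $n$ times, replacing each $X_i$ by $\lambda Y_i$ in turn, yields only $\p{\|S_X\| > 1} \leq (2\kappa)^n\,\p{\lambda\|S_Y\|>1}$. The exponential-in-$n$ blow-up is the principal obstacle.

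To avoid the blow-up, the proof should combine two devices. First, to strip $\kappa$ from the per-step constant, amplify $Y$: introduce $N := \lceil\kappa\rceil$ independent copies $Y_i^{(1)},\ldots,Y_i^{(N)}$ of each $Y_i$ and work with $\lambda\sum_{j=1}^N Y_i^{(j)}$ in place of $\lambda Y_i$. Combining the hypothesis $\p{\|X_i\| > t} \leq \kappa\,q$ (with $q = \p{\lambda\|Y_i\| > t}$) with the elementary estimate $1 - (1-q)^N \geq \tfrac{1}{2}\min(Nq, 1)$ and with Lévy-type inequalities for symmetric sums yields a bounded, $\kappa$-free comparison between $X_i$ and the amplified vector. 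Second, introduce a \emph{slackness scheme} with parameter $\alpha$: pick positive $\alpha_k$ with $\prod_k(1+\alpha_k) \leq 1 + \alpha$ and, at the $k$-th conditioning step, apply the one-step lemma to the inflated set $(1+\alpha_k)L$, separating out the resulting losses into ``exceptional events'' whose probabilities are controlled by a convergent geometric series summing to at most $16/\alpha$, rather than compounding multiplicatively. The dilations telescope into a single $(1+\alpha)$ factor on the scale side. Finally, converting from a tail bound for $\lambda\sum_{i,j} Y_i^{(j)} = \sum_{j=1}^N \lambda S_Y^{(j)}$ back to one for $\lambda S_Y$ via the triangle inequality and a union bound over $j \leq N$ supplies the remaining factor $\lceil\kappa\rceil$ on both the probability and the scale sides, producing the claimed constants.

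The main technical obstacle is the coordination of the amplification and the slackness scheme. The amplification is needed to make the per-step constant independent of $\kappa$, while the slackness is needed to convert the remaining multiplicative errors into an additive, summable series producing $O(1/\alpha)$ rather than a power of $n$. Without the amplification, no slackness allocation can cure the $\kappa^n$ factor at all; without the slackness, the amplified per-step constant still produces a geometric growth in $n$. The proof hinges on deploying both simultaneously, with a careful calibration of the parameters $\alpha_k$ against the summation, while preserving the symmetric-convex structure required for superstrong domination at each layer of the recursion.
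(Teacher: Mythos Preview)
Your one-step replacement lemma is correct, and the amplification device does reduce the per-step domination constant from $\kappa$ to an absolute constant (roughly $4$, via $1-(1-q)^N \geq \tfrac12\min(Nq,1)$ and L\'evy's inequality). But the core of your plan---the ``slackness scheme''---is not a real mechanism. The factor $2$ in the one-step lemma comes from the union bound
\[
\p{\,Z \notin (L-V)\cap(L+V)\,} \leq \p{Z+V \notin L} + \p{Z-V \notin L},
\]
and both summands have the same distribution because $V$ is symmetric. Inflating $L$ to $(1+\alpha_k)L$ does nothing to this step: it changes \emph{which} set you apply the bound to, not the fact that the bound doubles. There is no natural ``exceptional event'' here whose probability you could peel off additively; the doubling is intrinsic to replacing a single coordinate inside an \emph{indicator} of a symmetric convex set. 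After $n$ iterations you are stuck with $2^n$ (or $8^n$, post-amplification), and no telescoping of dilations rescues that.

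The paper's proof avoids this entirely by changing what is being tensorised. The crucial observation is that while $\mathbf{1}_{\{\|\sum x_i\|>1\}}$ is not coordinate-wise convex, the function
\[
(x_1,\ldots,x_n)\ \longmapsto\ \E_\varepsilon\bigl(\|{\textstyle\sum}\varepsilon_i x_i\|-1\bigr)_+
\]
is convex and symmetric in each coordinate separately. For such functions, the one-step replacement under $(1,1)$-domination incurs \emph{no loss at all}: the sublevel set in the $k$-th coordinate is already closed, convex, and symmetric, so the hypothesis applies directly. This gives an exact tensorisation of the ``proxy'' $\E\min\{\E_\varepsilon(\|\sum\varepsilon_i X_i\|-1)_+,\,1\}$. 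All the numerical constants $16$ and $1+\alpha$ then arise not from an $n$-step recursion but from a one-time comparison between this proxy and the tail $\p{\|\sum X_i\|>1}$, carried out via Kahane's inequality and Paley--Zygmund on Rademacher sums. The reduction from general $(\kappa,\lambda)$ to $(1,1)$ is done afterwards by a thinning argument (random $\{0,1\}$ multipliers $\delta_{i,k}$ with $\sum_k\delta_{i,k}=1$), which is in a sense dual to your amplification idea and supplies the $\lceil\kappa\rceil$ factors. What your proposal is missing is precisely this proxy; without it, or some other quantity whose coordinate-wise sublevel sets are symmetric and convex, the iterative scheme cannot be made $n$-free.
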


The crux of our argument is to devise a \emph{proxy}, a quantity that mimics tail probabilities $\p{\|X\|>1}$, but, as opposed to them, gives rise to inequalities that are easy to tensorise. With the aid of the proxy as well as several tools for random signs, we first establish the tensorisation of $(1,1)$-domination. Then we show how to deduce the theorem for arbitrary $\kappa$ and $\lambda$ from the case $\kappa = 1 = \lambda$.

Using similar tools and techniques we derive a tensorisation property for a certain notion of weak concentration, which can be of independent interest. We say that a symmetric random vector $X$ with values in a separable Banach space $E$ satisfies the \emph{weak Borell} inequality (or the weak concentration) with constants $C \geq 1$, $\delta > 0$ and $0 < \theta < 1$ ($\textrm{WB}(C,\delta,\theta)$ for short), if for every continuous norm $\|\cdot\|$ on $E$ such that $\p{\|X\|>1} < \theta$, we have
\begin{equation}\label{eq:WBdef}\tag{WB}
\p{\|X\|>\lambda} \leq C\lambda^{-\delta}\p{\|X\| >1}, \qquad \lambda \geq 1.
\end{equation}
Again, this is the same as saying that for $\lambda \geq 1$ and every closed symmetric convex subset $K$ of $E$ such that $\p{X \notin K} < \theta$, we have $\p{X \notin \lambda K} \leq C\lambda^{-\delta}\p{X \notin K}$.

For instance, if $X$ is a $\kappa$-concave random vector in $\R^d$, $\kappa < 0$ and $\theta < 1/2$, then $X$ satisfies $\text{WB}(C,-1/\kappa,\theta)$ with $C$ dependent only on $\kappa$ and $\theta$ (see \cite{Bor-mea} and \cite{Bor-fun}). Even though in general $\kappa$-concavity with negative $\kappa$ is not preserved by taking sums of independent vectors, the WB inequality is (modulo a slight change of constants), which is our second main result.

\begin{theorem}\label{thm:WBtensor}
Suppose that symmetric random vectors $X_1,\ldots,X_n$ in separable Banach space are independent and each satisfies $\text{WB}(C,\delta,\theta)$. Then $X_1 + \ldots + X_n$ satisfies $\text{WB}(C',\delta,\theta')$, where $C' = 12\cdot 9^{\delta}C$ and $\theta' = \min\{\theta/2,(96C\cdot 9^\delta)^{-1}\}$.
\end{theorem}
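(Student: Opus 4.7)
The plan is to combine L\'evy's inequality, Hoffmann--J\o rgensen's inequality, a ``sum lemma'' that bounds $\sum_i \p{\|X_i\|>1}$ by $\p{\|S\|>1}$, and a two-stage induction on $\lambda$. Fix a continuous norm $\|\cdot\|$ on $E$ with $p := \p{\|S\|>1} < \theta'$, where $S = X_1 + \cdots + X_n$; the goal is $\p{\|S\|>\lambda} \leq C' p \lambda^{-\delta}$ for every $\lambda \geq 1$. First I would apply L\'evy's inequality to deduce $\p{\|X_i\|>t} \leq 2\p{\|S\|>t}$ for each $i$ and each $t$, giving $\p{\|X_i\|>1} \leq 2p < \theta$ (using $\theta' \leq \theta/2$); the hypothesis $\mathrm{WB}(C,\delta,\theta)$ then applies to each $X_i$ with the present norm, yielding $\p{\|X_i\|>\mu} \leq C\mu^{-\delta}\p{\|X_i\|>1}$ for $\mu \geq 1$.

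The core of the argument is a sum lemma: with $\sigma := \sum_i \p{\|X_i\|>1}$ and $N := \#\{i:\|X_i\|>1\}$, one shows $\sigma \leq 4p/(1-4p)$, hence $\sigma$ is a small constant multiple of $p$ under the smallness of $p$. The proof sets $T_i := S - X_i$; by the symmetry of $T_i$ and the inequality $\|x+T_i\|+\|x-T_i\| \geq 2\|x\|>2$ whenever $\|x\|>1$, one concludes that $\p{\|x+T_i\|>1} \geq 1/2$ for every such $x$, and integrating over $X_i$ yields $\p{\|S\|>1, \|X_i\|>1} \geq \tfrac12 \p{\|X_i\|>1}$. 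Summing, $\E[\1_{\{\|S\|>1\}} N] \geq \sigma/2$, while the independence of the indicators $\1_{\{\|X_i\|>1\}}$ together with Cauchy--Schwarz gives $\E[\1_{\{\|S\|>1\}} N] \leq \sqrt{p(\sigma+\sigma^2)}$; rearranging these produces $\sigma \leq 4p/(1-4p)$.

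With the sum lemma in hand I would apply Hoffmann--J\o rgensen's inequality
$$\p{\|S\|>2t+s} \leq 4\p{\|S\|>t}^2 + \p{\max_i \|X_i\|>s}$$
with $t = s = \lambda/3$, bounding the max by the union bound, WB on each $X_i$, and the sum lemma: $\p{\max_i \|X_i\|>\lambda/3} \leq C(\lambda/3)^{-\delta}\sigma \leq \mathrm{const}\cdot Cp\cdot 3^\delta \lambda^{-\delta}$. This yields the recursion $q(\lambda) \leq 4q(\lambda/3)^2 + \mathrm{const}\cdot Cp\cdot 3^\delta \lambda^{-\delta}$ for $q(\lambda) := \p{\|S\|>\lambda}$. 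To close the proof, induct on $\lambda$ in two stages: for $\lambda \in [1,3]$ the trivial $q(\lambda) \leq p$ together with $C' \geq 3^\delta$ suffices; for $\lambda \in [3,9]$ the recursion is applied with the unconditional bound $q(\lambda/3) \leq p$ (no induction needed there), so the quadratic term is merely $4p^2$ and is easily absorbed; for $\lambda \geq 9$ the inductive hypothesis $q(\lambda/3) \leq C'p(\lambda/3)^{-\delta}$ together with $\lambda^{-\delta} \leq 9^{-\delta}$ tames the factor $9^\delta = 3^{2\delta}$ that arises from squaring $q(\lambda/3)$, and the closing inequality $4(C')^2 p + \mathrm{const}\cdot C\cdot 3^\delta \leq C'$ follows directly from $C' = 12\cdot 9^\delta C$ and $p \leq (96C\cdot 9^\delta)^{-1}$ with room to spare.

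The principal obstacle will be the sum lemma. In order to avoid the tensorisation-breaking factor of $n$ in the union bound for $\max_i \|X_i\|$, one must bound $\sum_i \p{\|X_i\|>1}$ by an absolute constant multiple of $\p{\|S\|>1}$, independently of $n$; the symmetry-plus-second-moment argument described above is precisely what achieves this. Once the sum lemma is available, the rest of the proof is a fairly standard iteration of Hoffmann--J\o rgensen's inequality together with bookkeeping of constants.
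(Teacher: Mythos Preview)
Your proposal is correct and follows essentially the same architecture as the paper's proof: L\'evy-type control of the individual summands, Hoffmann--J\o rgensen with $s=t=u=\lambda/3$, a bound on $\sum_i\p{\|X_i\|>1}$ by a constant times $\p{\|S\|>1}$, and an induction in powers of $3$ that closes with exactly the constants $C'=12\cdot 9^\delta C$ and $\theta'=\min\{\theta/2,(96C\cdot 9^\delta)^{-1}\}$.

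The one place you diverge is the ``sum lemma,'' and here you are working harder than necessary. Your symmetry-plus-Cauchy--Schwarz argument is valid and yields $\sigma\le 4p/(1-4p)$, but the paper simply invokes the elementary inequality
\[
\sum_{j=1}^n \p{\|X_j\|>t}\le \frac{\p{X_n^*>t}}{1-\p{X_n^*>t}},
\]
which holds for any independent (not necessarily symmetric) random vectors, together with $\p{X_n^*>1}\le 2\p{\|S\|>1}$, to get $\sigma\le 2p/(1-2p)$ in one line. The paper also runs the induction over integers $k$ with $p_k=\p{\|S\|>3^k}$ and then rounds arbitrary $\lambda$ to the nearest power of $3$, rather than splitting into the ranges $[1,3]$, $[3,9]$, $[9,\infty)$; the two schemes are equivalent up to bookkeeping.
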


As an application of this result, we establish superstrong domination for weighted sums of i.i.d. symmetric random vectors satisfying the weak Borell inequality when the sequences of weights are comparable in terms of majorisation. Recall that one sequence of real numbers $a=(a_1,\ldots,a_n)$ is majorised by another one $b=(b_1,\ldots,b_n)$, usually denoted $a\prec b$, if the nonincreasing rearrangements $a_1^*\geq\ldots\geq a_n^*$ and $b_1^*\geq\ldots\geq b_n^*$ of $a$ and $b$ satisfy the inequalities
\[
\sum_{j=1}^k a^*_j \leq \sum_{j=1}^k b_j^* \ \ \mbox{for each } 1\leq k\leq n-1 \ \ \mbox{and } \ \sum_{j=1}^n a_j = \sum_{j=1}^n b_j.\]
Equivalently, $a$ is a convex combination of the permutations $(b_1',\ldots,b_n')$ of $b$ (see for example Theorem II.1.10 in \cite{Bh}).

\begin{theorem}\label{thm:schur}
Let $X_1, X_2,\ldots$ be i.i.d. symmetric random vectors in separable Banach space. Assume that  for some $C > 0$, $0 < \theta < 1$ and $\delta > 1$ each $X_i$ satisfies $\text{WB}(C,\delta,\theta)$. Let $a = (a_1,\ldots,a_n)$ and $b = (b_1,\ldots,b_n)$ be sequences of real numbers such that $a$ is majorised by $b$. Then
\[
\sum_{i=1}^n a_iX_i \prec_{(\kappa,\lambda)} \sum_{i=1}^n b_iX_i
\]
with $\kappa = \max\{2\theta^{-1},96C\cdot 9^\delta,12C\cdot 9^{\delta}(\delta-1)^{-1}\}$ and $\lambda = 2$.
\end{theorem}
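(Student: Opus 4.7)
The plan is to reduce the domination to the weak Borell inequality for $S_b := \sum_i b_i X_i$ (Theorem \ref{thm:WBtensor}) via a Birkhoff--Jensen sandwich that turns majorisation of weights into a convex-function comparison of partial sums.

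First I would exploit the equivalent definition of majorisation: write $a = \sum_\pi \lambda_\pi \pi(b)$ for a probability vector $(\lambda_\pi)_\pi$ indexed by permutations, and introduce a random permutation $\Pi$, independent of $X_1,\ldots,X_n$, with $\p{\Pi = \pi} = \lambda_\pi$. Setting $S_a := \sum_i a_i X_i$ and $S_{\Pi b} := \sum_i b_{\Pi(i)} X_i$, the i.i.d.\ assumption yields $S_{\Pi b} \stackrel{d}{=} S_b$, while by construction $\E[S_{\Pi b} \mid X_1,\ldots,X_n] = S_a$. Fixing now an arbitrary continuous norm $\|\cdot\|$ on the Banach space, the function $\phi(x) := (\|x\|-1/2)_+$ is convex, so conditional Jensen gives $\E\phi(S_a) \leq \E\phi(S_b)$. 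Combined with the elementary estimate $\1_{t>1}\leq 2(t-1/2)_+$ for $t\geq 0$, this produces
\[
\p{\|S_a\| > 1} \;\leq\; 2\E\phi(S_b) \;=\; 2\int_{1/2}^\infty \p{\|S_b\| > u}\,\dd u.
\]

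To finish, I would invoke Theorem \ref{thm:WBtensor}: since each $X_i$ satisfies $\text{WB}(C,\delta,\theta)$ (and this property is invariant under the scalar multiplications $X_i \mapsto b_i X_i$), $S_b$ itself satisfies $\text{WB}(C',\delta,\theta')$ with $C' = 12\cdot 9^\delta C$ and $\theta' = \min\{\theta/2,(96C\cdot 9^\delta)^{-1}\}$. I would then split into two cases on the size of $\p{\|S_b\|>1/2}$. When $\p{\|S_b\|>1/2}\geq \theta'$, the bound $\p{\|S_a\|>1}\leq 1\leq \theta'^{-1}\p{\|S_b\|>1/2}$ is immediate. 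Otherwise, applying the WB inequality to the rescaled norm $2\|\cdot\|$ gives $\p{\|S_b\|>u}\leq C'(2u)^{-\delta}\p{\|S_b\|>1/2}$ for $u\geq 1/2$, and the tail integral above evaluates (here the hypothesis $\delta>1$ is essential for convergence) to at most $\frac{C'}{2(\delta-1)}\p{\|S_b\|>1/2}$, so that $\p{\|S_a\|>1}\leq \frac{C'}{\delta-1}\p{\|S_b\|>1/2}$. Taking $\lambda = 2$ and $\kappa = \max\{\theta'^{-1},C'/(\delta-1)\}$ produces exactly $\kappa = \max\{2\theta^{-1},\,96C\cdot 9^\delta,\,12C\cdot 9^\delta/(\delta-1)\}$, as in the statement.

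The technical heavy lifting is already done by Theorem \ref{thm:WBtensor}; the only genuinely new observation needed here is the Birkhoff--Jensen identity $S_a = \E[S_{\Pi b}\mid X]$, which makes any convex-in-norm functional of $S_b$ dominate its counterpart for $S_a$, after which the WB tail bound plus a trivial case split on the size of $\p{\|S_b\|>1/2}$ closes the argument. I do not anticipate a serious obstacle; the only mild subtlety is choosing a Jensen test function (like $\phi(x)=(\|x\|-1/2)_+$) whose Markov-type inverse controls $\p{\|S_a\|>1}$ while keeping the threshold $1/2$ matched to the $\lambda=2$ in the conclusion.
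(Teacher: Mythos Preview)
Your proposal is correct and follows essentially the same approach as the paper: Birkhoff decomposition plus Jensen for a convex test function of the form $(\|x\|-c)_+$, Theorem~\ref{thm:WBtensor} to get the weak Borell inequality for $S_b$, and a case split on whether the relevant tail probability exceeds $\theta'$. The only cosmetic differences are that the paper phrases the Jensen step as a direct convex combination over permutations (rather than via a random permutation and conditional expectation) and uses $\varphi(x)=(\|x\|-1)_+$ to bound $\p{\|S_a\|>2}$ before rescaling the norm at the end, whereas you build the rescaling into the test function $\phi(x)=(\|x\|-1/2)_+$; the resulting constants are identical.
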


This theorem does not hold under the weaker assumption that the $X_i$ satisfy the weak concentration with $\delta < 1$. To see that, fix $\delta \in (0,1)$ and take $X_i$ to be independent real valued symmetric $\delta$-stable random variables. Then $\p{|X_1| > t} \sim t^{-\delta}$, for large $t$, hence the $X_i$ satisfy $\text{WB}(C,\delta',\theta)$ if and only if $\delta' \leq \delta$. Consider the sequences $a = (1/n,\ldots,1/n)$ and $b = (1,0,\ldots,0)$. Then $a \prec b$ and $\sum a_iX_i$ has the same distribution as $n^{1/\delta-1}X_1$, so $\sum_{i=1}^n a_iX_i \prec_{(\kappa,\lambda)} \sum_{i=1}^n b_iX_i$ would particularly imply that $\p{ |X_1| > 1 } \leq \kappa \p{\lambda |X_1| > n^{1/\delta-1}}$, which is not true for large $n$. We suspect that our assumption of $\delta > 1$ can be weakened to $\delta \geq 1$.

\section{Auxiliary results}\label{sec:auxres}
 
In this section we collect several well-known inequalities which will be needed in our proofs. We begin with three results for random signs. Here and throughout $\varepsilon_1, \varepsilon_2, \ldots$ are independent random signs each taking the value $\pm 1$ with probability $1/2$. Let $v_1,\ldots,v_n$ be vectors in a separable Banach space $(E,\|\cdot\|)$. Kahane's inequality (see~\cite{Kah} or Proposition 1.4.1. in \cite{KW}) says that for $s, t > 0$, we have
\begin{equation}\label{eq:kah}
\p{\|{\sum} \varepsilon_i v_i\| > s + t} \leq 4\p{\|{\sum} \varepsilon_i v_i\| > s}\p{\|{\sum} \varepsilon_i v_i\| > t}.
\end{equation}
We also recall the optimal $L_1-L_2$ moment comparison due to Lata\l a and Oleszkiewicz (see \cite{LO}), that is
\begin{equation}\label{eq:LO}
\E\|{\sum} \varepsilon_i v_i\|^2 \leq 2(\E\|{\sum} \varepsilon_i v_i\|)^2.
\end{equation}
This, combined with the Paley-Zygmund inequality yields that for any $\theta \in (0,1)$, we have
\begin{equation}\label{eq:LOPZ}
\p{\|{\sum} \varepsilon_i v_i\| > \theta \E\|{\sum} \varepsilon_i v_i\|} \geq \frac{1}{2}(1-\theta)^2.
\end{equation}
The contraction principle (see for instance Theorem 4.4 in \cite{LT}) in particular asserts that for two sequences of real numbers $(a_i)_{i=1}^n$ and $(b_i)_{i=1}^n$ such that $|a_i| \leq |b_i|$ for each $i \leq n$, we have
\begin{equation}\label{eq:contr}
\E\|{\sum} \varepsilon_i a_i v_i\| \leq \E\|{\sum} \varepsilon_i b_i v_i\|.
\end{equation}

Let us recall several classical inequalities for sums of symmetric independent random vectors $X_1,\ldots,X_n$ with values in the separable Banach space $(E,\|\cdot\|)$. Denote as usual $S_j = X_1+\ldots+X_j$, $j \leq n$, $X_n^* = \max_{j\leq n}\|X_j\|$ and $S_n^* = \max_{j\leq n}\|S_j\|$. The L\'evy inequality says that
\begin{equation}\label{eq:Levy}
\p{S_n^* > t} \leq 2\p{\|S_n\|>t}, \qquad t \geq 0.
\end{equation}
Moreover, we have
\begin{equation}\label{eq:Xn*}
\p{X_n^*>t} \leq 2\p{\|S_n\|>t}, \qquad t \geq 0.
\end{equation}
The Hoffmann-J\o rgensen inequality asserts that
\begin{equation}\label{eq:HJ}
\p{S_n^* > s+t+u} \leq \p{X_n^* >s} + 2\p{S_n^*>t}\p{\|S_n\|>u}, \qquad s,t,u \geq 0.
\end{equation}
Lastly, even without the symmetry of the $X_i$, we have
\begin{equation}\label{eq:sumX_iuppbd}
\sum_{j=1}^n\p{\|X_j\|>t} \leq \frac{\p{X_n^*>t}}{1-\p{X_n^*>t}}, \qquad t \geq 0.
\end{equation}
(All of these inequalities can be found for instance in Chapter 1 of \cite{KW}).

\section{Proof of Theorem \ref{thm:kwaptensor}}

The goal of the first three subsections is to show Theorem \ref{thm:kwaptensor} when $\kappa = 1 = \lambda$. In the last subsection we show how to deduce the general case.

\subsection{Conditional convexity and a proxy}

We start with a simple lemma which lies at the heart of our tensorisation argument.

\begin{lemma}\label{lm:condconvex}
Let $X_1,\ldots, X_n$ and $Y_1,\ldots,Y_n$ be independent symmetric random vectors with values in a separable Banach space $E$ such that $X_i$ is $(1,1)$-dominated by $Y_i$ for each $i = 1,\ldots,n$. Let $\varphi\colon E^n \rightarrow [0,\infty)$ be a continuous function, convex with respect to each coordinate. Then for $t \geq 0$ we have
\[
\p{\E_\varepsilon \varphi(\varepsilon_1X_1,\ldots,\varepsilon_nX_n) > t} \leq \p{\E_\varepsilon \varphi(\varepsilon_1Y_1,\ldots,\varepsilon_nY_n) > t}.
\]
\end{lemma}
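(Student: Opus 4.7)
The plan is to prove the lemma by a one-at-a-time swapping argument, in which we replace each $X_i$ by $Y_i$ successively, at each step using the $(1,1)$-domination applied to a single coordinate. The key structural observation is that the function
\[
\Phi(x_1,\ldots,x_n) := \E_\varepsilon \varphi(\varepsilon_1 x_1,\ldots,\varepsilon_n x_n) = \frac{1}{2^n}\sum_{\varepsilon \in \{-1,+1\}^n} \varphi(\varepsilon_1 x_1,\ldots,\varepsilon_n x_n)
\]
is (i) continuous, as a finite sum of continuous functions; (ii) convex in each coordinate separately, since the hypothesis on $\varphi$ is preserved by averaging; and (iii) symmetric in each coordinate, since the averaging over $\varepsilon_i \in \{-1,+1\}$ forces $\Phi(\ldots,-x_i,\ldots) = \Phi(\ldots,x_i,\ldots)$. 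This last property is what makes the argument work: it is built in automatically by the symmetrisation, without any extra assumption on $\varphi$.

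Next I would set up a telescoping. For $k = 0,1,\ldots,n$, define the vector $Z^{(k)} = (Y_1,\ldots,Y_k,X_{k+1},\ldots,X_n)$, so that $Z^{(0)} = (X_1,\ldots,X_n)$ and $Z^{(n)} = (Y_1,\ldots,Y_n)$, and assume (with no loss of generality by independence) that all $X_i$ and $Y_i$ live on the same product probability space and are mutually independent. It suffices to show that for each $k$,
\[
\p{\Phi(Z^{(k)}) > t} \;\leq\; \p{\Phi(Z^{(k+1)}) > t}.
\]
Conditioning on everything other than the $(k+1)$-st coordinate reduces this to showing that for any fixed values $y_1,\ldots,y_k$ and $x_{k+2},\ldots,x_n$, if one sets
\[
h(z) := \Phi(y_1,\ldots,y_k,\,z,\,x_{k+2},\ldots,x_n),
\]
then $\p{h(X_{k+1}) > t} \leq \p{h(Y_{k+1}) > t}$. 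An integration of this inequality against the joint distribution of the remaining (independent) vectors then gives the desired step.

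Finally, to verify the one-dimensional comparison for $h$, observe that by properties (i)--(iii) of $\Phi$, the function $h\colon E\to[0,\infty)$ is continuous, convex and symmetric ($h(-z)=h(z)$). Hence its sublevel set
\[
K_t := \{z \in E : h(z) \leq t\}
\]
is closed (by continuity), convex (by convexity of $h$), and symmetric about the origin. By the definition of $(1,1)$-domination applied to the symmetric convex closed set $K_t$,
\[
\p{h(X_{k+1}) > t} = \p{X_{k+1} \notin K_t} \;\leq\; \p{Y_{k+1} \notin K_t} = \p{h(Y_{k+1}) > t},
\]
which closes the step. Chaining the $n$ inequalities yields the lemma.

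The argument is essentially a hybrid/swapping scheme, and the only substantive point is the one I emphasised in the first paragraph: introducing $\E_\varepsilon$ converts ``convexity in each coordinate'' into ``convexity \emph{and} symmetry in each coordinate,'' which is precisely the class of one-dimensional test functions that $(1,1)$-domination controls. I do not expect any genuine obstacle; the only thing to be mildly careful about is continuity of $\Phi$ on a general Banach space (which is immediate here because $\varphi$ is continuous by hypothesis), so that the sublevel sets $K_t$ are actually closed and the $(1,1)$-dominance may be applied directly.
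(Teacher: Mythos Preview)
Your proof is correct and is essentially the paper's own argument: a one-coordinate-at-a-time swap, conditioning on the remaining variables and applying $(1,1)$-domination to the closed symmetric convex sublevel set of the (symmetrised, coordinatewise convex) function $\Phi$. Your exposition makes explicit the role of the $\E_\varepsilon$ averaging in producing the symmetry of the sublevel set, which the paper leaves implicit.
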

\begin{proof}
We condition on $X_2,\ldots,X_n$ and define the set
\[
K = \{x \in E, \ \E_\varepsilon\varphi(\varepsilon_1x,\varepsilon_2X_2,\ldots,\varepsilon_nX_n) \leq t\},
\] 
which is closed, convex and symmetric. Using $X_1 \prec_{(1,1)} Y_1$, we get $\p{X_1 \notin K} \leq \p{Y_1 \notin K}$ which means that
\[
\mathbb{P}_{X_1}\left(\E_\varepsilon \varphi(\varepsilon_1X_1,\varepsilon_2X_2,\ldots,\varepsilon_nX_n) > t\right) \leq \mathbb{P}_{Y_1}\left(\E_\varepsilon \varphi(\varepsilon_1Y_1,\varepsilon_2X_2,\ldots,\varepsilon_nX_n) > t\right),
\]
so taking the expectation of both sided against $X_2,\ldots,X_n$ gives
\[
\p{\E_\varepsilon \varphi(\varepsilon_1X_1,\varepsilon_2X_2,\ldots,\varepsilon_nX_n) > t} \leq \p{\E_\varepsilon \varphi(\varepsilon_1Y_1,\varepsilon_2X_2,\ldots,\varepsilon_nX_n) > t}.
\]
Similarly, we condition on $Y_1,X_3,\ldots,X_n$ to swap $X_2$ for $Y_2$, etc. and finally arrive at the desired inequality.
\end{proof}

Note that the function $u \mapsto (u-1)_+ = \max\{u-1,0\}$ is convex and nondecreasing. Therefore, for a normed space $(E,\|\cdot\|)$ the function $\varphi\colon E^n\rightarrow [0,\infty)$ defined by $\varphi(x_1,\ldots,x_n) = (\|\sum_{i=1}^n x_i\|-1)_+$ is convex (and continuous). From Lemma \ref{lm:condconvex} we thus get the following corollary.
\begin{corollary}\label{cor:proxy}
Let $X_1,\ldots, X_n$ and $Y_1,\ldots,Y_n$ be independent symmetric random vectors with values in a separable Banach space $E$ such that $X_i$ is $(1,1)$-dominated by $Y_i$ for each $i = 1,\ldots,n$. Let $\|\cdot\|$ be a continuous norm on $E$. Then for $t \geq 0$ we have
\[
\p{\E_\varepsilon (\|{\sum} \varepsilon_iX_i\|-1)_+ > t } \leq \p{\E_\varepsilon (\|{\sum} \varepsilon_iY_i\|-1)_+ > t }.
\]
In particular,
\begin{equation}\label{eq:tens}
\int_0^1 \p{\E_\varepsilon (\|{\sum} \varepsilon_iX_i\|-1)_+ > t } \dd t \leq \int_0^1 \p{\E_\varepsilon (\|{\sum} \varepsilon_iY_i\|-1)_+ > t } \dd t.
\end{equation}
\end{corollary}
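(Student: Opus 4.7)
The statement is essentially a direct corollary of Lemma~\ref{lm:condconvex}, and my plan is to explicitly identify the test function $\varphi$ that makes the corollary fall out. I would take
\[
\varphi(x_1,\ldots,x_n) = \bigl(\|x_1+\cdots+x_n\|-1\bigr)_+.
\]
Then I would verify the three hypotheses needed to feed $\varphi$ into the lemma: nonnegativity is obvious from the positive part; continuity follows from continuity of the norm composed with the continuous (piecewise affine) function $u\mapsto (u-1)_+$; and convexity in each coordinate follows by noting that, with the other variables fixed, $x_i\mapsto \|x_1+\cdots+x_n\|$ is a norm of an affine function of $x_i$, hence convex, and composing with the nondecreasing convex map $u\mapsto (u-1)_+$ preserves convexity. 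With these checks done, Lemma~\ref{lm:condconvex} applied to $\varphi$ yields the pointwise inequality
\[
\p{\E_\varepsilon (\|{\sum} \varepsilon_iX_i\|-1)_+ > t } \leq \p{\E_\varepsilon (\|{\sum} \varepsilon_iY_i\|-1)_+ > t }
\]
for every $t\geq 0$.

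For the ``in particular'' clause \eqref{eq:tens}, I would simply integrate the pointwise inequality in $t$ over $[0,1]$. Since both integrands are nonnegative and bounded by $1$, Fubini/Tonelli (or just monotone integration of a valid pointwise inequality) makes this step immediate and requires no further justification.

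There is no real obstacle here: the lemma was tailored precisely so that applying it to $\varphi(x_1,\ldots,x_n)=(\|\sum x_i\|-1)_+$ produces the desired tensorised comparison for the proxy $\E_\varepsilon (\|\sum \varepsilon_i X_i\|-1)_+$. The only mild point worth flagging is that one must work with the expectation over the signs $\varepsilon_i$ inside the probability; this is harmless because the $X_i$ are themselves symmetric (so $\varepsilon_i X_i \stackrel{d}{=} X_i$), and in the argument the $\varepsilon_i$ play the role of an averaging parameter that keeps each coordinate slice of $\varphi$ convex after integrating out the signs. Once the lemma is applied, the corollary is complete in two lines.
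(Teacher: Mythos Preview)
Your proposal is correct and matches the paper's own argument exactly: the paper simply observes that $\varphi(x_1,\ldots,x_n)=(\|\sum x_i\|-1)_+$ is continuous and convex (as a nondecreasing convex function of a norm), applies Lemma~\ref{lm:condconvex}, and then integrates over $[0,1]$ for \eqref{eq:tens}. Your closing remark about the role of the $\varepsilon_i$ is unnecessary, since the averaging over signs is already built into the statement of Lemma~\ref{lm:condconvex} and requires no further justification here.
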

For a nonnegative random variable $Y$ we plainly have
\[
\E \min\{Y,1\} = \int_0^\infty \p{\min\{Y,1\}>t} \dd t = \int_0^\infty \p{Y > t, 1 > t} \dd t = \int_0^1 \p{Y>t} \dd t.
\]
Therefore, in view of this corollary, the following quantity
\[
\int_0^1 \p{\E_\varepsilon (\|{\sum} \varepsilon_iX_i\|-1)_+ > t } \dd t = \E\min\{\E_\varepsilon (\|{\sum} \varepsilon_iX_i\|-1)_+,1\} 
\]
tensorises as well. This is our proxy and we will show that it is comparable to $\p{\|\sum X_i\| > 1}$. This is where the assumption of symmetry and the aforementioned tools for random signs come into play.

\subsection{Upper and lower bounds for the proxy}

\begin{lemma}\label{lm:lowbd}
Suppose that $X_1,\ldots,X_n$ are independent symmetric random vectors in a normed space $(E,\|\cdot\|)$. Then for $0 < \alpha \leq 1$ we have
\[
\E\min\{\E_\varepsilon (\|{\sum} \varepsilon_iX_i\|-1)_+,1\} \geq \alpha\p{\|{\sum} X_i\| > 1 + \alpha}.
\]
\end{lemma}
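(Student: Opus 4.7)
The plan is to exploit the symmetry of the $X_i$ to introduce random signs inside the probability on the right-hand side, and then to make a pointwise comparison between $(u-1)_+$ and the indicator $\alpha\1_{\{u>1+\alpha\}}$.

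First, since the $X_i$ are symmetric and independent, for random signs $\varepsilon_1,\ldots,\varepsilon_n$ independent of the $X_i$ we have the distributional identity $(X_1,\ldots,X_n) \stackrel{d}{=} (\varepsilon_1 X_1,\ldots,\varepsilon_n X_n)$. In particular,
\[
\p{\|{\sum} X_i\| > 1 + \alpha} = \p{\|{\sum} \varepsilon_i X_i\| > 1 + \alpha},
\]
where the probability on the right is taken jointly over the $X_i$ and the $\varepsilon_i$.

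Next, for any real number $u$ and any $\alpha \geq 0$, one has the elementary pointwise bound $(u-1)_+ \geq \alpha \1_{\{u > 1+\alpha\}}$. Applied to $u = \|\sum \varepsilon_i X_i\|$ and then averaged over the signs $\varepsilon$ (conditionally on the $X_i$), this yields
\[
\E_\varepsilon (\|{\sum} \varepsilon_iX_i\|-1)_+ \;\geq\; \alpha \,\mathbb{P}_\varepsilon\!\left(\|{\sum} \varepsilon_iX_i\| > 1+\alpha\right).
\]
Since $\alpha \leq 1$ and the probability on the right is at most $1$, the right-hand side is at most $1$; therefore, using the easy fact that $\min\{A,1\} \geq B$ whenever $A \geq B$ and $B \leq 1$, we conclude
\[
\min\!\left\{\E_\varepsilon (\|{\sum} \varepsilon_iX_i\|-1)_+,\,1\right\} \;\geq\; \alpha\, \mathbb{P}_\varepsilon\!\left(\|{\sum} \varepsilon_iX_i\| > 1+\alpha\right).
\]
Taking expectation with respect to $X_1,\ldots,X_n$ and applying Fubini together with the distributional identity above gives the claim:
\[
\E\min\!\left\{\E_\varepsilon (\|{\sum} \varepsilon_iX_i\|-1)_+,\,1\right\} \;\geq\; \alpha\, \p{\|{\sum} \varepsilon_iX_i\| > 1+\alpha} \;=\; \alpha\, \p{\|{\sum} X_i\| > 1+\alpha}.
\]

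There is no real obstacle: the only small subtlety is that one must justify removing the outer $\min\{\cdot,1\}$ truncation, which is where the hypothesis $\alpha \leq 1$ enters (ensuring $\alpha \cdot \mathbb{P}_\varepsilon(\cdots) \leq 1$). Everything else is a one-line symmetrisation plus a trivial pointwise inequality for $(u-1)_+$.
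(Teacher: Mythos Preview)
Your proof is correct and follows essentially the same route as the paper: both use the pointwise bound $(u-1)_+\ge \alpha\1_{\{u>1+\alpha\}}$, average over the signs, and then invoke $\alpha\le 1$ to drop the truncation by $1$ before taking the outer expectation and using symmetry.
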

\begin{proof}
Denote $U = \E_\varepsilon(\|{\sum} \varepsilon_iX_i\|-1)_+$. Notice that for a positive parameter $\alpha$ we have 
\[
U \geq \E_\varepsilon (\|{\sum} \varepsilon_iX_i\|-1)_+\1_{\{\|\sum \varepsilon_i X_i\| > 1+ \alpha\}} \geq \alpha \mathbb{P}_\varepsilon\big(\|{\sum} \varepsilon_iX_i\| > 1 + \alpha\big).
\]
Thus,
\[
\E\min\{U,1\} \geq \E\min\{\alpha \mathbb{P}_\varepsilon\big(\|{\sum} \varepsilon_iX_i\| > 1 + \alpha\big),1\}.
\]
When $\alpha \leq 1$ the last expression becomes $\alpha\p{\|{\sum} X_i\| > 1 + \alpha}$.
\end{proof}

\begin{lemma}\label{lm:upbd}
Let $X_1,\ldots,X_n$ be independent symmetric random vectors in a normed space $(E,\|\cdot\|)$. Then we have
\[
\E\min\{\E_\varepsilon (\|{\sum} \varepsilon_iX_i\|-1)_+,1\} \leq 16\p{\|{\sum} X_i\| > 1}.
\]
\end{lemma}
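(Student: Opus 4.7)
The plan is to estimate $\E\min\{U,1\}$ with $U := \E_\varepsilon(\|{\sum}\varepsilon_iX_i\|-1)_+$ by splitting according to the size of the conditional tail probability
\[
q := \mathbb{P}_\varepsilon\bigl(\|{\sum}\varepsilon_iX_i\| > 1 \mid X_1,\ldots,X_n\bigr).
\]
Symmetry of the $X_i$ is what makes this approach viable: it gives ${\sum}X_i \stackrel{d}{=} {\sum}\varepsilon_iX_i$, hence $\E q = p$, where $p := \p{\|{\sum}X_i\| > 1}$ is the quantity against which we want to compare $\E\min\{U,1\}$.

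On the event $\{q \geq c\}$, for a constant $c \in (0,1/4)$ to be chosen, I would use the trivial bound $\min\{U,1\} \leq 1$ and Markov's inequality, which yields $\p{q \geq c} \leq p/c$. On the complementary event $\{q < c\}$, the plan is to convert the smallness of $q$ into geometric decay of $f(t) := \mathbb{P}_\varepsilon(\|{\sum}\varepsilon_iX_i\| > t \mid X)$ via Kahane's inequality~\eqref{eq:kah} applied conditionally on $X_1,\ldots,X_n$; a direct induction on $k$ using $f(k+1) \leq 4 f(k) f(1)$ gives $f(k) \leq (4q)^k / 4$ for every integer $k \geq 1$. Since $f$ is nonincreasing,
\[
U \;=\; \int_1^\infty f(t)\,\dd t \;\leq\; \sum_{k=1}^\infty f(k) \;\leq\; \frac{q}{1-4q}.
\]
Choosing $c$ so that $4c$ stays bounded away from $1$ (say $c = 1/8$) gives $U \leq 2q$ on $\{q < c\}$, and hence $\E[U\,\1_{\{q<c\}}] \leq 2\E q = 2p$. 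Combining the two pieces produces a bound of the form $p/c + 2p$ on $\E\min\{U,1\}$, which fits comfortably inside $16p$.

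The main technical obstacle is that $\E U = \E(\|{\sum}X_i\|-1)_+$ can be arbitrarily large or even infinite, so no direct Markov argument bounds $\E\min\{U,1\}$ by $\E U$ in a useful way; moreover, concavity of $u\mapsto\min\{u,1\}$ pushes Jensen's inequality in the wrong direction, so the truncation at $1$ cannot be exploited that way either. The gain from the truncation must instead be extracted pointwise, and this is precisely what conditional Kahane delivers: once the base probability $q$ is small, the conditional tails of $\|{\sum}\varepsilon_iX_i\|$ decay geometrically, and summing this decay controls the full integral defining $U$. Symmetry is used twice: once to identify the distribution of ${\sum}X_i$ with that of the Rademacher sum so that $\E q = p$, and once to legitimize the application of Kahane's inequality to the conditioned process.
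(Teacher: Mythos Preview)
Your proposal is correct and follows the same overall architecture as the paper: split on the event $\{q \geq c\}$ versus $\{q < c\}$, use the trivial bound $\min\{U,1\}\leq 1$ together with Markov's inequality $\p{q\geq c}\leq p/c$ on the first piece, and control $U$ pointwise on the second piece. The difference lies in how the small-$q$ piece is handled. The paper applies Kahane's inequality once, writing $U=\int_0^\infty f(1+t)\,\dd t \leq 4q\,\E_\varepsilon\|\sum\varepsilon_iX_i\|$, and then bounds the conditional first moment on $\{q<c\}$ via the Paley--Zygmund inequality combined with the Lata\l a--Oleszkiewicz $L_1$--$L_2$ comparison \eqref{eq:LO}--\eqref{eq:LOPZ}. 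You instead iterate Kahane to obtain the geometric tail bound $f(k)\leq (4q)^k/4$ and sum directly, which bypasses \eqref{eq:LO} and \eqref{eq:LOPZ} entirely. Your route is therefore more elementary---it uses only Kahane's inequality---and as a bonus yields the constant $10$ rather than the paper's $16$ (or its optimized $\approx 15.45$). The paper's route, on the other hand, isolates the conditional mean $\E_\varepsilon\|\sum\varepsilon_iX_i\|$ as an intermediate quantity, which can be convenient in related arguments.
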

\begin{proof}
For $p \in (0,1)$ define the event 
\[
A_p = \{\mathbb{P}_\varepsilon\big(\|{\sum} \varepsilon_iX_i\| > 1\big)  >p\}.
\]
Clearly, we have
\[
\E\min\{\E_\varepsilon (\|{\sum} \varepsilon_iX_i\|-1)_+,1\} \leq \E\1_{A_p} + \E \E_\varepsilon (\|{\sum} \varepsilon_iX_i\|-1)_+ \1_{A_p^c}.
\]
We handle the first term directly by Markov's inequality,
\[
\p{A_p} \leq \frac{1}{p}\E\mathbb{P}_\varepsilon\big(\|{{\sum}} \varepsilon_iX_i\| > 1\big) = \frac{1}{p} \p{\|{{\sum}} X_i\| > 1}.
\]
To deal with the second term, first notice that by Kahane's inequality \eqref{eq:kah} we have
\[
\E_\varepsilon (\|{\sum} \varepsilon_iX_i\|-1)_+ = \int_0^\infty \mathbb{P}_\varepsilon\big(\|{\sum} \varepsilon_iX_i\| > 1+t\big) \dd t \leq 4\mathbb{P}_\varepsilon\big(\|{\sum} \varepsilon_iX_i\| > 1\big)\E_\varepsilon\|{\sum} \varepsilon_iX_i\|.
\]
Second, notice that the quantity $\E_\varepsilon\|{\sum} \varepsilon_iX_i\|$ is bounded on the event $A_p^c$. Indeed, suppose that $\E_\varepsilon\|{\sum} \varepsilon_iX_i\| >1$ and set $\theta = (\E_\varepsilon\|{\sum} \varepsilon_iX_i\|)^{-1}$. Then on $A_p^c$, by \eqref{eq:LOPZ},
\[
p \geq \mathbb{P}_\varepsilon\big(\|{\sum} \varepsilon_iX_i\| > 1\big) = \mathbb{P}_\varepsilon\big(\|{\sum} \varepsilon_iX_i\| > \theta\E_\varepsilon\|{\sum} \varepsilon_iX_i\|\big) \geq \frac{1}{2}(1-\theta)^2,
\]
so $\theta \geq 1 - \sqrt{2p}$ and provided that $p < 1/2$, we get
\[
\E_\varepsilon\|{\sum} \varepsilon_iX_i\| \leq \frac{1}{1 - \sqrt{2p}}.
\]
Putting these together yields
\[
\E \E_\varepsilon (\|{\sum} \varepsilon_iX_i\|-1)_+ \1_{A_p^c} \leq \frac{4}{1-\sqrt{2p}}\E \mathbb{P}_\varepsilon\big(\|{\sum} \varepsilon_iX_i\| > 1\big) = \frac{4}{1-\sqrt{2p}}\p{\|{{\sum}} X_i\| > 1}.
\]
Altogether,
\[
\E\min\{\E_\varepsilon (\|{\sum} \varepsilon_iX_i\|-1)_+,1\} \leq \left(\frac{1}{p} + \frac{4}{1-\sqrt{2p}}\right)\p{\|{{\sum}} X_i\| > 1}.
\]
Choosing $p = 1/8$ finishes the proof (the optimal choice $p \approx 0.16$ gives the constant $\approx 15.45$).
\end{proof}

\subsection{Proof in the case $\kappa = 1 = \lambda$}

Suppose that $X_1,\ldots, X_n$ and $Y_1,\ldots,Y_n$ are independent symmetric random vectors with values in a separable Banach space $E$. Let $X_i$ be $(1,1)$-dominated by $Y_i$ for each $i \leq n$. Fix a continuous norm $\|\cdot\|$ on $E$. We would like to show that $\p{\|{\sum} X_i\| > 1} \leq \kappa
\p{\lambda\|{\sum} Y_i\| > 1}$ for some universal constants $\kappa$ and $\lambda$. Fix $0 < \alpha \leq 1$. Applying consecutively Lemma \ref{lm:lowbd}, Corollary \ref{cor:proxy} and Lemma \ref{lm:upbd} yields
\[
\alpha \p{\|{\sum} X_i\| > 1+\alpha} \leq 16 \p{\|{\sum} Y_i\| > 1}.
\]
Rescaling the norm gives the desired bound with $\kappa = 16\alpha^{-1}$ and $\lambda = 1+\alpha$. \hfill$\square$

\subsection{Reduction to the case $\kappa = 1 = \lambda$}

We describe two arguments leading to the conclusion that it suffices to prove Theorem~\ref{thm:kwaptensor} when $\kappa = 1 = \lambda$, thus finishing the whole proof. 

The first argument is based on the following lemma whose proof is essentially given in the second step of the proof of Theorem 3.2.1 in \cite{KW}. We sketch it for completeness.

\begin{lemma}\label{lm:reduction}
Suppose that for every $n\geq 1$ and independent symmetric random vectors $X_1,\ldots,X_n$ and $Y_1,\ldots,Y_n$ in separable Banach space, the following is true

``If $X_i$ is $(1,1)$-dominated by $Y_i$, $i\leq n$, then $\sum X_i$ is $(\kappa_0,\lambda_0)$-dominated by $\sum Y_i$."

\noindent
Then for every $\kappa, \lambda \geq 1$, $n\geq 1$ and independent symmetric random vectors $X_1,\ldots,X_n$ and $Y_1,\ldots,Y_n$ in separable Banach space such that $X_i$ is $(\kappa,\lambda)$-dominated by $Y_i$, $i\leq n$, we have that $\sum X_i$ is $(\lceil\kappa\rceil\kappa_0,\lceil\kappa\rceil\lambda\lambda_0)$-dominated by $\sum Y_i$.
\end{lemma}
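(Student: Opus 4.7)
The plan is to reduce the $(\kappa,\lambda)$-case to the $(1,1)$-case via Bernoulli thinning, apply the assumed $(1,1)$-hypothesis to an enlarged independent system of $nm$ pairs (where $m=\lceil\kappa\rceil$), and then transfer the resulting inequality back to one about $\sum_i X_i$ and $\sum_i Y_i$ using the triangle inequality on the $Y$-side and L\'evy's inequality~\eqref{eq:Xn*} on the $X$-side.

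Concretely, I would introduce jointly independent iid copies $X_i^{(k)}$ of $X_i$, iid copies $Y_i^{(k)}$ of $Y_i$, and independent Bernoullis $\xi_i^{(k)}$ with $\mathbb{P}(\xi_i^{(k)}=1)=1/m$, for $i=1,\ldots,n$ and $k=1,\ldots,m$, and set $\tilde{X}_i^{(k)}=\xi_i^{(k)}X_i^{(k)}$ and $\tilde{Y}_i^{(k)}=\lambda Y_i^{(k)}$. For any symmetric convex $K$,
\[
\mathbb{P}(\tilde{X}_i^{(k)}\notin K)=\tfrac{1}{m}\mathbb{P}(X_i\notin K)\le\tfrac{\kappa}{m}\mathbb{P}(\lambda Y_i\notin K)\le\mathbb{P}(\tilde{Y}_i^{(k)}\notin K),
\]
so $\tilde{X}_i^{(k)}\prec_{(1,1)}\tilde{Y}_i^{(k)}$ (using $\kappa\le m$), and the hypothesis applies to the independent $nm$-pair system, giving $\sum_{i,k}\tilde{X}_i^{(k)}\prec_{(\kappa_0,\lambda_0)}\sum_{i,k}\tilde{Y}_i^{(k)}$. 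On the $Y$-side, write $\sum_{i,k}\tilde{Y}_i^{(k)}=\lambda\sum_k W_k$ with $W_k=\sum_i Y_i^{(k)}$ iid copies of $\sum_i Y_i$; the triangle inequality $\|\sum_k W_k\|\le m\max_k\|W_k\|$ (for the Minkowski functional of $K$) together with a union bound gives $\mathbb{P}(\lambda_0\sum_{i,k}\tilde{Y}_i^{(k)}\notin K)\le m\,\mathbb{P}(m\lambda\lambda_0\sum_i Y_i\notin K)$, contributing the factor $\lceil\kappa\rceil$ in the rescaling. On the $X$-side, I would compare $\mathbb{P}(\sum_i X_i\notin K)$ with $\mathbb{P}(\sum_{i,k}\tilde{X}_i^{(k)}\notin K)$ by applying L\'evy's inequality~\eqref{eq:Xn*} to the $m$ iid partial sums $\sum_i X_i^{(k)}$, $k=1,\ldots,m$, and symmetrizing over the $\xi_i^{(k)}$'s.

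The main obstacle is the $X$-side transfer: the thinning probability $1/m$ is chosen precisely so that each $X_i$ contributes once in expectation to the thinned sum, but extracting the comparison with a sharp enough constant to yield exactly $\lceil\kappa\rceil\kappa_0$ (rather than a larger universal multiple) in the final multiplier requires a careful symmetrization argument combined with L\'evy's inequality~\eqref{eq:Xn*}, which is the content of step~2 of the proof of Theorem~3.2.1 in~\cite{KW}. Composing the two transfers then yields $\mathbb{P}(\sum_i X_i\notin K)\le\lceil\kappa\rceil\kappa_0\,\mathbb{P}(\lceil\kappa\rceil\lambda\lambda_0\sum_i Y_i\notin K)$ for every symmetric convex $K$, which is the desired $(\lceil\kappa\rceil\kappa_0,\lceil\kappa\rceil\lambda\lambda_0)$-domination.
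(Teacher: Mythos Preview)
Your setup diverges from the paper's in a way that creates the very obstacle you identify. The paper does \emph{not} take independent copies $X_i^{(k)}$ of the $X_i$, nor does it take the Bernoullis to be jointly independent. Instead it keeps the same $X_i$ throughout and chooses auxiliary $\{0,1\}$-variables $\delta_{i,k}$ (independent of the $X_i,Y_i$) that are \emph{coupled across $k$}: for each fixed $i$ one has $\sum_{k=1}^{m}\delta_{i,k}=1$ (concretely, $\delta_{i,k}(t_1,\dots,t_n)=\1_{[(k-1)/m,\,k/m]}(t_i)$ on $[0,1]^n$). This single constraint makes the $X$-side transfer trivial:
\[
\sum_i X_i=\sum_i\Big(\sum_k\delta_{i,k}\Big)X_i=\sum_k\sum_i\delta_{i,k}X_i,
\]
so by the triangle inequality and a union bound over $k$,
\[
\p{\|{\textstyle\sum_i}X_i\|>1}\le\sum_{k=1}^{m}\p{m\|{\textstyle\sum_i}\delta_{i,k}X_i\|>1}.
\]
For each fixed $k$ the vectors $\delta_{1,k}X_1,\dots,\delta_{n,k}X_n$ \emph{are} independent symmetric with $\delta_{i,k}X_i\prec_{(1,1)}\lambda Y_i$ (same $Y_i$, no copies), so the hypothesis applies directly and the sum over $k$ contributes exactly the factor $m=\lceil\kappa\rceil$. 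There is no L\'evy step and no symmetrization; step~2 of Theorem~3.2.1 in \cite{KW} is precisely this coupled construction, not a device for rescuing your independent-copies framework.

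With your fully independent setup the $X$-side comparison with constant $1$ is in fact false. Take $n=1$, $m=2$, $X_1$ a Rademacher sign, and $K=[-1/2,1/2]$. Then $\p{X_1\notin K}=1$, while
\[
\p{\xi_1^{(1)}X_1^{(1)}+\xi_1^{(2)}X_1^{(2)}\notin K}=\tfrac12\cdot 1+\tfrac14\cdot\tfrac12=\tfrac58<1,
\]
so you cannot recover the stated constant $\lceil\kappa\rceil\kappa_0$ along this route. The missing idea is exactly the coupling $\sum_k\delta_{i,k}=1$; once you impose it (and drop the copies), the proof is three lines.
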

\begin{proof}
Suppose that $X_i \prec_{(\kappa,\lambda)} Y_i$. The main idea is to take auxiliary random variables $\delta_{i,k}$, $i \leq n$, $k \leq \lceil\kappa\rceil$, independent of the $X_i$ such that for each $i, k$, we have $\p{\delta_{i,k} = 1} = \frac{1}{\lceil\kappa\rceil} = 1 - \p{\delta_{i,k} = 0}$, moreover $\sum_{k=1}^{\lceil\kappa\rceil} \delta_{i,k} = 1$ for each $i$, and the variables $\delta_{1,k},\ldots,\delta_{n,k}$ are independent for each $k$. For instance, we can define them on the probability space $[0,1]^n$ with Lebesgue measure by the formula
\[
\delta_{i,k}(t_1,\ldots,t_n) = \1_{[\frac{k-1}{\lceil\kappa\rceil},\frac{k}{\lceil\kappa\rceil}]}(t_i).
\]
We check that for every $i$ and $k$, we have $\delta_{i,k}X_i \prec_{(1,1)} \lambda Y_i$, so for every $k$ we obtain the comparison $\sum_i \delta_{i,k}X_i \prec_{(\kappa_0,\lambda_0)} \lambda\sum_i Y_i$ and thus
\begin{align*}
\p{\|{\sum_i} X_i\| > 1} = \p{\|{\sum_i\sum_k} \delta_{i,k}X_i\| > 1} &\leq \sum_{k=1}^{\lceil\kappa\rceil} \p{\lceil\kappa\rceil\|{\sum_i} \delta_{i,k}X_i\| > 1} \\
&\leq \lceil\kappa\rceil\kappa_0 \p{\lceil\kappa\rceil\lambda\lambda_0\|{\sum_i} Y_i\| > 1}.
\end{align*}
\end{proof}

The second argument is based on the observation that if for some symmetric independent random vectors $X_i$ and $Y_i$ we have $X \prec_{(\kappa,\lambda)} Y_i$, $i \leq n$, then taking $\delta_i$ to be independent Bernoulli random variables such that $\p{\delta_i = 1} = 1/\kappa$, $\p{\delta_i = 0} = 1- 1/\kappa$ and defining $X_i' = \delta_i X_i$, $Y_i' = \lambda Y_i$ we get $X_i' \prec_{(1,1)} Y_i'$. To obtain $\sum X_i \prec \sum Y_i$, we first apply \eqref{eq:tens} from Corollary \ref{cor:proxy} to the $X_i'$ and $Y_i'$, which gives
\[
\int_0^1 \p{\E_\varepsilon (\|{\sum} \varepsilon_iX'_i\|-1)_+ > t } \dd t \leq \int_0^1 \p{\E_\varepsilon (\|{\sum} \varepsilon_iY_i'\|-1)_+ > t } \dd t.
\] 
Then we bound the right hand side by Lemma \ref{lm:upbd}, but before using Lemma \ref{lm:lowbd} for the lower bound of the left hand side, we would like to pass from the $X_i'$ to $X_i$. This can be achieved if we have an inequality like this one
\[
\mathbb{P}\left(\E_\varepsilon \|{\sum} \varepsilon_i\delta_i X_i \| > u\right) \geq c\mathbb{P}\left(c'\E_\varepsilon \|{\sum} \varepsilon_i X_i \| > u\right), \qquad u > 0,  
\]
where $c$ and $c'$ are some constants. This is possible thanks to a simple lemma which is in the spirit of the Paley-Zygmund inequality.

\begin{lemma}\label{lm:removedelta}
Let $v_1,\ldots,v_n$ be vectors in a separable Banach space $(E,\|\cdot\|)$, $p \in (0,1]$ and let $\delta_1,\ldots,\delta_n$ be independent Bernoulli random variables with $\p{\delta_i = 1} = p$, $\p{\delta_i = 0} = 1-p$. Then
\[
\mathbb{P}_\delta\left( \E_\varepsilon \|{\sum} \varepsilon_i\delta_iv_i\| > 1 \right) \geq \frac{p}{4}\1_{\{\E_\varepsilon \|\sum \varepsilon_i v_i\| > 2/p\}}.
\] 
\end{lemma}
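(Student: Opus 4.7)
The plan is to bound the random quantity $W(\delta) := \E_\varepsilon \|{\sum} \varepsilon_i \delta_i v_i\|$ via a first-moment comparison to $V := \E_\varepsilon \|{\sum} \varepsilon_i v_i\|$. I only need to treat the case $V > 2/p$, since otherwise the indicator on the right-hand side of the lemma vanishes and there is nothing to prove.

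The first step is to establish two complementary bounds on $W$. A pointwise upper bound $W(\delta) \leq V$, valid for every realisation of $\delta$, is immediate from the contraction principle \eqref{eq:contr} applied with the deterministic weights $|\delta_i| \leq 1$. A lower bound on the $\delta$-average, namely $\E_\delta W(\delta) \geq pV$, follows by swapping the order of integration and applying Jensen's inequality in $\delta$: for every fixed $\varepsilon$,
\[
\E_\delta \|{\sum} \varepsilon_i \delta_i v_i\| \geq \|{\sum} \varepsilon_i (\E \delta_i) v_i\| = p \|{\sum} \varepsilon_i v_i\|,
\]
and then integrating in $\varepsilon$.

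Combining these two bounds through the splitting
\[
pV \leq \E_\delta W = \E_\delta W \1_{\{W>1\}} + \E_\delta W \1_{\{W\leq 1\}} \leq V \cdot \mathbb{P}_\delta(W > 1) + 1
\]
yields $\mathbb{P}_\delta(W > 1) \geq p - 1/V$. Under the assumption $V > 2/p$ this simplifies to $\mathbb{P}_\delta(W > 1) > p/2 \geq p/4$, which is the desired inequality. (The argument actually yields the stronger constant $p/2$; the weaker $p/4$ in the statement presumably leaves some slack for later applications.)

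Neither ingredient -- the contraction principle nor Jensen's inequality -- presents a genuine technical obstacle. The one conceptual point worth flagging is that the argument crucially uses the \emph{pointwise} upper bound $W(\delta) \leq V$, not merely an averaged one; this is precisely what lets us bound the contribution of the event $\{W > 1\}$ to $\E_\delta W$ by $V \cdot \mathbb{P}_\delta(W > 1)$ in the splitting step.
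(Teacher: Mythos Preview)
Your proof is correct and, in fact, cleaner than the paper's own argument. Both proofs share the same two ingredients---the contraction principle gives an upper bound on $W(\delta) = \E_\varepsilon\|\sum \varepsilon_i\delta_i v_i\|$, and Jensen's inequality gives $\E_\delta W \geq pV$---but they deploy them differently. The paper invokes the Paley--Zygmund inequality, which requires controlling the second moment $\E_\delta W^2$; it does this via the estimate $\E_\delta W^2 \leq V \cdot \E_\delta W \leq p^{-1}(\E_\delta W)^2$, obtained by pulling one factor out using contraction. This yields exactly $p/4$. Your route instead exploits the \emph{pointwise} bound $W(\delta) \leq V$ directly in a first-moment splitting, avoiding Paley--Zygmund entirely and producing the sharper constant $p/2$. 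Your argument is shorter and loses less; the paper's approach is more in the spirit of the second-moment method used elsewhere in the article (e.g., inequality~\eqref{eq:LOPZ}), which may explain why the authors reached for Paley--Zygmund here as well.
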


By virtue of this lemma, we can take above $c = \frac{1}{4\kappa}$ and $c' = \frac{1}{2\kappa}$. After passing through Lemma \ref{lm:lowbd} applied to the $X_i$ we conclude that $\sum X_i \prec_{(\kappa',\lambda')} \sum Y_i$ with $\kappa' = 64\alpha^{-1}\kappa$ and $\lambda' = 2(1+\alpha)\kappa\lambda$ for every $\alpha \in (0,1]$. We finish this section by showing the lemma.

\begin{proof}[Proof of Lemma \ref{lm:removedelta}]
Obviously we can assume that $\E_\varepsilon \|\sum \varepsilon_i v_i\| > 2/p$ since otherwise there is nothing to prove. By Jensen's inequality,
\[
\E_{\delta,\varepsilon}\|{\sum} \delta_i\varepsilon_i v_i\| \geq \E_{\varepsilon}\|\E_\delta{\sum} \delta_i\varepsilon_i v_i\| = p\E_{\varepsilon}\|{\sum_i} \varepsilon_i v_i\|,
\]
thus $\frac{1}{2}\E_{\delta}\E_\varepsilon \|\sum \delta_i\varepsilon_i v_i\| \geq \frac{p}{2}\E_\varepsilon \|\sum \varepsilon_i v_i\| > 1 $, so
\begin{align*}
\mathbb{P}_\delta\left( \E_\varepsilon \|{\sum} \varepsilon_i\delta_iv_i\| > 1 \right) &\geq \mathbb{P}_\delta\left( \E_\varepsilon \|{\sum} \varepsilon_i\delta_iv_i\| > \frac{1}{2}\E_{\delta}\E_\varepsilon \|{\sum} \delta_i\varepsilon_i v_i\| \right) \\
&\geq \frac{1}{4}\frac{\left(\E_{\delta}\E_\varepsilon \|\sum \delta_i\varepsilon_i v_i\|\right)^2}{\E_{\delta}\left(\E_\varepsilon \|\sum \delta_i\varepsilon_i v_i\|\right)^2},
\end{align*}
where in the last estimate we used the Paley-Zygmund inequality.
Using the contraction principle \eqref{eq:contr} we obtain ($\varepsilon_i'$ denote independent copies of $\varepsilon_i$)
\begin{align*}
\E_\delta\left( \E_\varepsilon \|{\sum} \delta_i\varepsilon_i v_i\| \right)^2 &= \E_\delta\left( \E_\varepsilon \|{\sum} \delta_i\varepsilon_i v_i\|\E_{\varepsilon'} \|{\sum} \delta_i\varepsilon_i' v_i\|\right) \\
&\leq \left(\E_\varepsilon \|{\sum} \varepsilon_i v_i\|\right)\left(\E_{\delta,\varepsilon} \|{\sum} \delta_i\varepsilon_i v_i\|\right) \leq \frac{1}{p}\left(\E_{\delta,\varepsilon} \|{\sum} \delta_i\varepsilon_i v_i\|\right)^2.
\end{align*}
This combined with the previous inequality finishes the proof.
\end{proof}

\section{Proof of Theorem \ref{thm:WBtensor}}

Suppose $X_1,\ldots,X_n$ are independent symmetric random vectors and each satisfies $WB(C,\delta,\theta)$. Let $S_n = X_1 + \ldots + X_n$. Fix a continuous norm $\|\cdot\|$. We would like to show that 
\[
\p{\|S_n\| > \lambda} \leq C'\lambda^{-\delta}\p{\|S_n\|>1}, \qquad \lambda \geq 1,
\]
provided that $\p{\|S_n\|>1} < \theta'$. (We shall find the values of the constants $C'$ and $\theta'$ as the argument goes along.) First observe that if $\theta' \leq \theta/2$, then by \eqref{eq:Xn*} we also have that
\[
\p{\|X_j\| > 1} \leq \p{X_n^*>1} \leq 2\p{\|S_n\|>1} < 2\theta' \leq \theta, \]
where $X_n^* = \max_{j \leq n} \|X_j\|$. 
This will let us use the WB inequality for $X_j$, $j = 1,\ldots,n$.

Let $p_k=\p{\|S_n\| > 3^k}$ for $k = 0,1,\ldots$. Our first goal is to establish that $p_k \leq C'\cdot 3^{-\delta k}p_0$, assuming $p_0 \leq \theta'$. Then, possibly increasing $C'$ we will get that $\p{\|S_n\| > \lambda} \leq C'\lambda^{-\delta}\p{\|S_n\|>1}$ for every $\lambda \geq 1$. We begin with deriving a recursive inequality for the $p_k$. Fix $k \geq 1$. By \eqref{eq:Levy} - \eqref{eq:sumX_iuppbd} and the union bound we obtain
\begin{align*}
p_k = \p{\|S_n\| > 3\cdot3^{k-1}} &\leq \p{X_n^* > 3^{k-1}} + 2\p{S_n^* > 3^{k-1}}\p{\|S_n\| > 3^{k-1}} \\
&\leq \sum_{j=1}^n\p{\|X_j\|> 3^{k-1}} + 4p_{k-1}^2 \\
&\leq C\cdot 3^{-\delta(k-1)}\sum_{j=1}^n\p{\|X_j\| > 1} + 4p_{k-1}^2 \\
&\leq C\cdot 3^{-\delta(k-1)}\frac{2p_0}{1-2p_0} + 4p_{k-1}^2. 
\end{align*}
If we assume additionally that $\theta' \leq 1/3$, then $\frac{1}{1-2p_0}\leq 3$, so
\[
p_k \leq 6C\cdot 3^{-\delta(k-1)}p_0 + 4p_{k-1}^2, \qquad k \geq 1.\]
Let us prove inductively that $p_k \leq (12\cdot3^{\delta}C)\cdot 3^{-k\delta}p_0$, $k \geq 0$. For $k = 0$ this is obvious. Suppose it holds for $k-1$, for some $k\geq 1$. By the recursive inequality,
\begin{align*}
p_k &\leq 6C\cdot 3^{-\delta(k-1)}p_0 + 4(12\cdot3^\delta C)^2\cdot 3^{-2\delta(k-1)}p_0^2 \\
&= (12\cdot3^\delta C)\cdot 3^{-k\delta}p_0\cdot\left(\frac{1}{2} + 48C\cdot 3^{-\delta k + 3\delta} p_0\right)\\ 
&\leq (12\cdot3^\delta C)\cdot 3^{-k\delta}p_0\cdot\left(\frac{1}{2} + 48C\cdot 3^{2\delta} \theta'\right)
\end{align*}
and we get the inductive assertion as long as $\theta' \leq (96C\cdot 9^\delta)^{-1}$. Therefore we set $\theta' = \min\{\theta/2,(96C\cdot 9^\delta)^{-1}\}$. Then, as we have shown,
\[
\p{\|S_n\|>\lambda} \leq (12\cdot3^{\delta}C)\cdot \lambda^{-\delta}\p{\|S_n\|>1},\]
for $\lambda = 3^k$, $k \geq 0$. It remains to extend this to any $\lambda \geq 1$. If $1 \leq \lambda < 3$, then trivially
\[
\p{\|S_n\| > \lambda} \leq \p{\|S_n\| > 1} \leq 3^{\delta}\lambda^{-\delta}\p{\|S_n\| > 1}.\]
If $3^k \leq \lambda < 3^{k+1}$ for some $k \geq 1$, we get
\begin{align*}
\p{\|S_n\| > \lambda} &\leq \p{\|S_n\|>3^k} \leq (12\cdot3^{\delta}C)\cdot 3^{-k\delta}\p{\|S_n\|>1} \\
&\leq (12\cdot3^{2\delta}C)\cdot \lambda^{-\delta}\p{\|S_n\|>1}.
\end{align*} 
We set $C' = 12\cdot 9^{\delta}C$ and the proof is complete. \hfill$\square$

\section{Proof of Theorem \ref{thm:schur}}

Since the sequence $a$ is majorised by $b$, there are nonnegative weights $\lambda_\sigma$ adding up to $1$ indexed by all permutations $\sigma$ of the $n$-element set $\{1,\ldots,n\}$ such that $a = \sum_\sigma \lambda_\sigma b_\sigma$, where $b_\sigma = (b_{\sigma(1)},\ldots,b_{\sigma(n)})$ is the sequence $b$ permuted according to $\sigma$. It easily follows that for every convex function $\varphi\colon E\to \R$ defined on the Banach space $E$ the $X_i$ take values in, we have
\[
\E \varphi\left(\sum a_iX_i\right) = \E \varphi\left(\sum_i\sum_\sigma \lambda_\sigma b_{\sigma(i)}X_i\right) \leq \sum_\sigma\lambda_\sigma \E\varphi\left(\sum_i b_{\sigma(i)}X_i\right) = \E\varphi\left(\sum b_iX_i\right)  
\]
(provided the expectations exist).

Notice that since each $b_iX_i$ satisfies $\text{WB}(C,\delta,\theta)$, by Theorem \ref{thm:WBtensor}, the sum $\sum b_iX_i$ satisfies $\text{WB}(C',\delta,\theta')$, where $C' = 12\cdot 9^{\delta}C$ and $\theta' = \min\{\theta/2,(96C\cdot 9^\delta)^{-1}\}$. 

Let $\|\cdot\|$ be a continuous norm on $E$. Denote $S_a = \|\sum a_iX_i\|$ and $S_b = \|\sum b_iX_i\|$. We want to show that $\p{S_a > 1} \leq \kappa \p{\lambda S_b > 1}$. If $\p{S_b > 1} \geq \theta'$, then we trivially get
\[
\p{S_a > 1} \leq 1 = \frac{1}{\theta'}\theta' \leq \frac{1}{\theta'}\p{S_b > 1}.
\]
Suppose that $\p{S_b > 1} < \theta'$. Using the initial observation for $\varphi(x) = (\|x\|-1)_+$ we get
\[
\p{S_a > 2} = \p{(S_a-1)_+ > 1} \leq \E(S_a-1)_+ \leq \E(S_b-1)_+.
\]
By the weak Borell inequality for $S_b$ we get
\[
\E(S_b - 1)_+ = \int_1^\infty \p{S_b > \lambda} \dd \lambda \leq \int_1^\infty C'\lambda^{-\delta}\p{S_b > 1}\dd \lambda = \frac{C'}{\delta - 1}\p{S_b > 1}.
\]
In summary, we have showed that for any continuous norm $\|\cdot\|$ on $E$, we have
\[
\p{\|\sum a_iX_i\| > 2} \leq \kappa \p{\|\sum b_iX_i\| > 1}
\]
with $\kappa = \max\{\frac{1}{\theta'},\frac{C'}{\delta-1}\}$. Rescaling the norm finishes the proof. \hfill$\square$

\subsection*{Acknowledgments}

The first named author was partially supported by Narodowe Centrum Nauki (Poland), grant no.
2016/21/B/ST1/01489.

This material is partially based upon work supported by the NSF under Grant No. 1440140, while the second named author was in residence at the MSRI in Berkeley, California, during the fall semester of 2017. He was also partially supported by the Simons Foundation. This work was initiated when he visited the University of Warsaw in April 2017. He is immensely grateful for their hospitality.

\end{document}